\def\B{\mathbb B}
\def\R{\mathbb R}
\def\S{\mathbb S}
\def\*{\times}
\def\nix{\varnothing}
\def\la{\langle}
\def\ra{\rangle}
\def\ol{\overline}
\def\sm{\setminus}
\def\del{\partial}
\newtheorem{thm}{Theorem}
\newtheorem{defn}[thm]{Definition}
\newtheorem{ques}[thm]{Question}
\newtheorem{cor}[thm]{Corollary}
\newtheorem{prop}[thm]{Proposition}
\newtheorem{rem}[thm]{Remark}
\begin{document}

\title{Handlebody Neighbourhoods and a conjecture of Adjamagbo}
\author{David Gauld}
\address{The University of Auckland}
  \email{d.gauld@auckland.ac.nz}

\date{\today}

\begin{abstract}
Using handlebodies we verify a conjecture of P Adjamagbo that if the frontier of a relatively compact subset $V_0$ of a manifold is a submanifold then there is an increasing family $\{V_r\}$ of relatively compact open sets indexed by the positive reals so that the frontier of each is a submanifold, their union is the whole manifold and for each $r\ge 0$ the subfamily indexed by $(r,\infty)$ is a neighbourhood basis of the closure of the $r^{\rm th}$ set.
\end{abstract}
\maketitle

 \subjclass{}

\keywords{}

 %%%%%%%%%%%%%%%%%%%%%%%%%%%%%%%%%%%%%%%%
 %%%%%%%%%%%%%%%%%%%%%%%%%%%%%%%%%%%%%%%%
  
 %%%%%%%%%%%%%%%%%%%%%%%%%%%%%%%%%%%%%%%%
 %%%%%%%%%%%%%%%%%%%%%%%%%%%%%%%%%%%%%%%%
 
 \section{Introduction}
 %section{Introduction}
 %section{Introduction}
 %section{Introduction}
 
Throughout this paper we use the euclidean metric on $\R^n$. All of our manifolds are assumed to be metrisable. No extra structure on the manifold is assumed.

Pascal Adjamagbo, \cite{A21}, has proposed the following conjecture\footnote{I thank Adjamagbo for drawing my attention to this conjecture.}:
\begin{quote}
Given a relatively compact non-empty open subset $V_0$ of a connected topological manifold $M^m$ such that the boundary of $V_0$ is a manifold, there exists an increasing family $\la V_r\ra_{r\in[0,\infty)}$ of relatively compact open subsets of $M$ the boundaries of which are topological manifolds such that $M$ is the union of the elements of the family, and that for any $r \in[0,\infty)$, the family $\la V_{r'}\ra_{r'>r}$ is a fundamental system of neighbourhoods of the closure of $V_r$.
\end{quote}
Adjamagbo makes no assumptions regarding the tameness of the boundary manifold, so it could be wild at every point.

In this paper we verify Adjamagbo's conjecture.

An important tool used in our proof is a handlebody, a special structure on (part of) a manifold.

\begin{defn}
Let $M^m$ be a topological manifold with boundary and let $k\in\{0,1,\dots,m\}$. Suppose that $e:\S^{k-1}\*\B^{m-k}\to\del M$ is an embedding and let $M_e$ be the $m$-manifold obtained from the disjoint union of $M$ and $\B^k\*\B^{m-k}$ by identifying $x\in\S^{k-1}\*\B^{m-k}$ with $e(x)\in\del M$. Then we say that $M_e$ is obtained from $M$ by \emph{adding a $k$-handle of dimension $m$ to $M$}, with the prefix $k$ suppressed when we do not want to specify it. The image of $\B^k\*\B^{m-k}$ in $M_e$ is called a \emph{($k$)-handle}. A \emph{handlebody} is a manifold obtained inductively beginning at $\nix$ then successively adding a handle of dimension $m$ to the output of the previous step. If a handlebody has been obtained by adding only finitely many handles then we call it a \emph{finite handlebody}.
\end{defn}
In this definition we take $\B^\ell=\{x\in\R^\ell\ /\ |x|\le1\}$, the unit ball in $\R^\ell$, so $\S^{\ell-1}$ is the boundary sphere of $\B^\ell$. Of course when constructing a handlebody, of necessity the first handle to be added must be a 0-handle as $\del\nix=\nix$ and $\S^{k-1}\not=\nix$ when $k>0$. A handlebody is a manifold with boundary.

The following two theorems characterise the existence of handlebody structures on topological manifolds.

\begin{thm}{\rm\cite[Theorem 9.2]{FQ90}}
A metrisable manifold fails to have a handlebody decomposition if and only if it is an unsmoothable 4-manifold.
\end{thm}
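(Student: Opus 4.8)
Since this is cited as \cite[Theorem 9.2]{FQ90}, I would not attempt to reprove it in full; my plan would be to organise the argument around the two implications and to isolate the one genuinely deep input. The ``only if'' direction asserts that an unsmoothable $4$-manifold admits no handle decomposition, and for this it is enough to show that \emph{any} topological handle decomposition of a $4$-manifold yields a smooth structure, contradicting unsmoothability. Here one argues by induction on the handles: the union $N$ of the handles added so far has been equipped with a smooth structure, and the next handle $\B^k\*\B^{4-k}$ is attached along the $3$-dimensional region $\S^{k-1}\*\B^{4-k}\subset\del N$; since $\del N$ is a smooth $3$-manifold, the classical smoothing theory in dimensions at most $3$ (Moise, Munkres) lets one isotope the attaching embedding to a smooth one without changing the homeomorphism type of the result, after which the glued manifold carries a smooth structure once the corners are straightened. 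Exhausting $M$ by these smoothed pieces produces the desired contradiction.

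For the ``if'' direction I would split on the dimension. If $\dim M\le3$, or if $\dim M=4$ and $M$ is smoothable, fix a smooth structure (one exists automatically in dimension at most $3$, by Moise and Munkres) and choose a proper Morse function; exhausting $M$ by the compact sublevel sets and inserting handles as the level crosses critical values yields a handle decomposition in the usual way, with infinitely many handles allowed when $M$ is non-compact. This settles everything except dimension at least $5$.

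The case $\dim M\ge5$ is the crux and the step I expect to be the main obstacle: here a handle decomposition must be produced \emph{unconditionally}, even though such a manifold need not admit any smooth or piecewise-linear structure. This is the content of the Kirby--Siebenmann handle-straightening theory for dimension at least $6$, together with Quinn's extension to dimension $5$ and to the non-compact and bounded setting; the mechanism is the torus trick combined with immersion and smoothing theory, used to promote topological handle presentations of neighbourhoods to honest ones, the Kirby--Siebenmann obstruction to a $\mathrm{CAT}$ structure playing no role in the mere existence of handles. In the metrisable generality of this paper one must also ensure that the handles form a locally finite, exhausting family, which is again part of the Kirby--Siebenmann--Quinn output. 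Together with the first paragraph, the three sub-cases give the stated dichotomy: the manifolds lacking a handle decomposition are precisely the unsmoothable $4$-manifolds.
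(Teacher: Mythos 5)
This statement is quoted from Freedman--Quinn and the paper gives no proof of its own, so there is nothing internal to compare against; your sketch correctly reproduces the standard argument behind the citation, with the deep inputs attributed where they belong (Moise--Munkres smoothing of the $3$-dimensional attaching data to show a handled $4$-manifold is smoothable, Morse theory in the smoothable cases, and Kirby--Siebenmann together with Quinn's dimension-$5$ work for the unconditional existence of handles in dimension at least $5$). That is essentially the same route as the cited source, and deferring those ingredients rather than reproving them is the appropriate level of detail here.
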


\begin{thm}{\rm\cite[Theorem 8.2]{FQ90}}
Every connected, non-compact, metrisable 4-manifold is smoothable.
\end{thm}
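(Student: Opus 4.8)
The plan is to obtain smoothability from the general smoothing theory of topological manifolds, sharpened to cover dimension four in the non-compact case, and then to close with an elementary cohomological vanishing. Recall the Kirby--Siebenmann picture: a smooth structure on a topological $n$-manifold $M$ is the same data as a vertical lift to $B\mathrm{O}$ of a classifying map $M\to B\mathrm{TOP}$ for the tangent microbundle; the homotopy fibre $\mathrm{TOP}/\mathrm{O}$ has $\pi_i=0$ for $i\le 4$ except that $\pi_3(\mathrm{TOP}/\mathrm{O})=\Z/2$, so over the $4$-dimensional complex $M$ the only obstruction to such a lift is the Kirby--Siebenmann class $ks(M)\in H^4(M;\Z/2)$, and once a lift exists it is unique up to an action of $H^3(M;\Z/2)$. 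For $n\ge 5$ the assertion that this obstruction theory actually computes smoothability rests on the Product Structure Theorem together with handle straightening. The one substantive step here is to invoke Quinn's extension of that same package to $n=4$ when $M$ is non-compact (or has non-empty boundary): its technical core is the straightening of a single topological handle attached to a smooth $4$-manifold, which after standard-position manoeuvres reduces to embedding a flat $2$-disc in a $4$-manifold and is supplied by Freedman's disc-embedding theorem. The $\pi_1$-hypotheses that block the corresponding statement for \emph{closed} $4$-manifolds are circumvented here because non-compactness furnishes room to iterate the embedding construction indefinitely, pushing the residual indeterminacy out an end of $M$, where it evaporates. I expect this handle-straightening step to be the genuine obstacle: it is exactly where four-dimensional topology is hard, and everything else in the argument is formal.

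Granting the dimension-four smoothing package, the remainder is short. Since the obstruction theory now truly detects smoothability of $M$, it suffices to prove $ks(M)=0$, i.e.\ that $H^4(M;\Z/2)=0$. Every topological manifold is $\Z/2$-orientable, so Poincaré duality gives $H^4(M;\Z/2)\cong H^{\mathrm{lf}}_0(M;\Z/2)$, the Borel--Moore (locally finite) homology in degree zero; and for a connected \emph{non-compact} manifold this group vanishes, since the class of any point is the boundary of a properly embedded ray running out to an end of $M$ --- exactly as the class of a point bounds in $\R$. Hence $ks(M)=0$, the lift of $M\to B\mathrm{TOP}$ to $B\mathrm{O}$ exists, and therefore so does a smooth structure on $M$.

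Finally, two remarks on the hypotheses, which the argument exposes as essential. Connectedness and non-compactness enter \emph{only} to force $H^4(M;\Z/2)=0$, and neither can be dropped, as a closed $4$-manifold with $ks\neq0$ (such as Freedman's $E_8$-manifold) is not smoothable. The presence of a non-empty boundary is an equally good substitute for non-compactness: Quinn's package covers that case too, and for a connected $M$ with $\del M\neq\nix$ the duality computation gives $H^4(M;\Z/2)\cong H_0(M,\del M;\Z/2)=0$ as well. Thus the statement recorded here is the sharp one reachable by this route, and it is precisely what we shall feed into the handlebody machinery below.
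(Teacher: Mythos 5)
This statement is not proved in the paper at all: it is quoted verbatim as Theorem 8.2 of Freedman--Quinn, so there is no in-paper argument to compare yours against. Judged on its own terms, your sketch follows the standard route (essentially the one in Freedman--Quinn Chapter 8): smoothing theory identifies smooth structures with lifts of the tangent-microbundle classifying map through $B\mathrm{O}\to B\mathrm{TOP}$, the connectivity of $\mathrm{TOP}/\mathrm{O}$ puts the sole obstruction in $H^4(M;\Z/2)$ as the Kirby--Siebenmann class, and for a connected non-compact (metrisable, hence $\sigma$-compact) $4$-manifold $\Z/2$-Poincar\'e duality gives $H^4(M;\Z/2)\cong H^{\mathrm{lf}}_0(M;\Z/2)=0$ via a proper ray to an end; that computation is correct and is indeed where connectedness and non-compactness are used. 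The caveat is that the genuinely four-dimensional content --- that obstruction-theoretic smoothing actually \emph{works} in dimension four for non-compact manifolds, i.e.\ Quinn's handle-straightening theorem resting on Freedman's disc embedding --- is invoked as a black box, and that ingredient is essentially the substance of the cited theorem itself. So your proposal should be read as a correct reduction of the statement to Quinn's handle smoothing theorem plus a routine cohomological vanishing, not as an independent proof; for the purposes of this paper, which only uses the theorem to guarantee handle decompositions exist in the non-compact $4$-dimensional case, citing Freedman--Quinn as the author does is the appropriate level of detail.
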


A basic result needed in our proof is the following proposition.

\begin{prop}{\rm\cite[Proposition 3.17]{G14}}\label{good handlebody}
Suppose that $M$ is a handlebody and $K\subset M$ is compact. Then there is a finite handlebody $W\subset M$ which is a neighbourhood of $K$.
\end{prop}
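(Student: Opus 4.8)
The plan is to prove the sharper statement that \emph{every compact subset of a handlebody is contained in some finite subhandlebody}, and then to recover the proposition by a soft point-set argument. For the latter step I would use that a manifold is locally compact and Hausdorff to pick an open set $U\subseteq M$ with $K\subseteq U$ and $\ol U$ compact. Granting the sharper statement applied to $C=\ol U$, there is a finite handlebody $W\subseteq M$ with $\ol U\subseteq W$; then $K\subseteq U\subseteq W$ with $U$ open in $M$ forces $K$ into the topological interior of $W$, so $W$ is a neighbourhood of $K$, as required.

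To prove the sharper statement, fix a compact $C\subseteq M$. Since $M$ is a handlebody, write $M=\bigcup_n M_n$ with $M_0=\nix$, each $M_{n+1}$ obtained from $M_n$ by attaching a single handle; if this uses only finitely many handles we are done with $W=M$, so assume it does not. The structural fact I would lean on is that, being obtained by this inductive construction, $M$ carries the colimit topology with respect to the chain $M_0\subseteq M_1\subseteq\cdots$ of closed inclusions. Each $M_n$ is a finite handlebody, so it is enough to show $C\subseteq M_n$ for some $n$. Note that the ``new parts'' $N_n:=M_n\sm M_{n-1}$ partition $M$. Assume, for contradiction, that $C$ lies in no $M_n$; then $C$ meets $N_n$ for arbitrarily large $n$, so one can recursively choose $n_1<n_2<\cdots$ and points $c_j\in C\cap N_{n_j}$, and since the $N_n$ are pairwise disjoint these $c_j$ are distinct, giving an infinite set $D:=\{c_j:j\in\N\}\subseteq C$.

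For every $n$ we have $D\cap M_n=\{c_j:n_j\le n\}$, which is finite; the same holds for any subset of $D$, so by the colimit property every subset of $D$ is closed in $M$. Hence $D$ is an infinite, closed, discrete subset of the compact space $C$, which is impossible. Therefore $C\subseteq M_n$ for some $n$, proving the sharper statement and with it the proposition.

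The one genuinely load-bearing ingredient is the remark that a handlebody carries the colimit topology of the finite stages of its handle construction: it is this that turns the combinatorial fact that $D$ meets each stage in a finite set into the topological fact that $D$ is closed, letting compactness do the work. I expect the only delicate part to be bookkeeping if one wishes to allow transfinite handle constructions: then one argues by induction on the ordinal length, using that a compact set captured at a stage $M_\al$ is, by the inductive hypothesis applied to the handlebody $M_\al$, already captured at a finite substage. Notably, tameness or smoothability of the handle attachments plays no role anywhere in this argument.
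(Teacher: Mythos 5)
Your argument is correct. Note that this paper does not actually prove the proposition---it is imported verbatim from \cite[Proposition 3.17]{G14}---so there is no in-paper proof to compare against; judged on its own, your two-step plan (reduce to ``every compactum lies in a finite stage $M_n$'' via a relatively compact open $U\supseteq K$, then run the standard direct-limit argument producing an infinite closed discrete subset of a compact set) is the natural argument and it goes through. The one load-bearing point, as you say yourself, is that the handlebody carries the weak (colimit) topology determined by the finite stages $M_0\subseteq M_1\subseteq\cdots$; this does follow from the paper's definition, since each $M_{n+1}$ is by construction a quotient of $M_n\sqcup(\B^k\*\B^{m-k})$ in which $M_n$ sits as a closed subspace, so the whole space is the direct limit of a chain of closed embeddings of compact Hausdorff spaces, and then finiteness of $D\cap M_n$ gives closedness via $T_1$ exactly as you use it. Two small remarks. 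First, your closing sentence about transfinite handle constructions is the only genuinely soft spot: at a limit stage of uncountable cofinality the ``pick $c_j\in C\cap N_{n_j}$'' step no longer yields a set meeting every stage in a finite set, so the sketched induction on ordinal length would need real work there; but the paper's definition (``successively adding a handle\dots to the output of the previous step'') is an $\om$-indexed construction, so this case does not arise and the aside can simply be dropped. Second, for the way the proposition is actually used later in the paper one wants the finite handlebody $W$ to lie inside a prescribed open set $U_n\supseteq K$, which your statement does not provide as phrased, though your proof does: having chosen $U$ with $K\subseteq U\subseteq\ol U\subseteq U_n$ and $\ol U$ compact, one should take $W$ to be a finite stage containing $\ol U$ intersected with nothing larger---i.e., the refinement needed is to apply the argument inside the open submanifold $U_n$ rather than in $M$; this is cosmetic but worth recording if the proposition is to be quoted in the stronger form the applications require.
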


We also require the following weak version of the collaring theorem of Brown.

\begin{thm}{\rm\cite{B62}}\label{collared boundary}
Let $W$ be a finite handlebody. Then there is an embedding $e:\del W\*[a,b]\to W$ ($a<b$) such that $e(x,b)=x$ for each $x\in\del W$.
\end{thm}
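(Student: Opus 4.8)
The plan is to derive this from the collaring theorem of Brown \cite{B62}, which we may quote. First I would record that a finite handlebody $W$ is a compact topological $m$-manifold with boundary: this holds (vacuously) for $\nix$, and attaching a $k$-handle to a compact $m$-manifold with boundary $M$ --- gluing $\B^k\*\B^{m-k}$ to $\del M$ along an embedding of $\S^{k-1}\*\B^{m-k}$ --- again produces a compact space (a quotient of a disjoint union of two compacta) which is locally euclidean with boundary, as one checks directly in charts near the attaching region. After finitely many such steps, $W$ is compact and $\del W$ is a closed $(m-1)$-manifold.

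Brown's theorem \cite{B62} then applies to $W$ and furnishes a \emph{collar}: an embedding $c:\del W\*[0,1)\to W$ onto an open neighbourhood of $\del W$ in $W$ with $c(x,0)=x$ for every $x\in\del W$. There is no obstruction to invoking it, since being a compact (hence metrisable) topological manifold with boundary is all that the hypothesis requires. To land on the stated normalisation I would simply rescale the collar parameter: fix $\lambda\in(0,1)$ and set
\[
e(x,t)=c\!\left(x,\ \lambda\cdot\frac{b-t}{b-a}\right)\qquad\text{for }(x,t)\in\del W\*[a,b].
\]
Since $(x,t)\mapsto\bigl(x,\lambda(b-t)/(b-a)\bigr)$ is a homeomorphism of $\del W\*[a,b]$ onto $\del W\*[0,\lambda]$ and $c$ is an embedding on the latter, $e$ is an embedding; and $e(x,b)=c(x,0)=x$, which is exactly the assertion.

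In this route there is essentially no hard step: the only thing to verify is that a finite handlebody is a topological manifold with boundary, and that is routine. If one wanted a proof independent of \cite{B62}, the natural alternative is induction on the number of handles, carrying a boundary collar through each attachment; there the genuine difficulty is straightening the corner $\S^{k-1}\*\S^{m-k-1}$ created where the new handle meets $\del M$, and then splicing a collar of the free face $\B^k\*\S^{m-k-1}$ onto the truncated inherited collar of $\del M\sm e(\S^{k-1}\*\B^{m-k})$. That corner-straightening would be the main obstacle on that path --- which is precisely why appealing to Brown's theorem is the economical choice here.
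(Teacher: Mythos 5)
Your proposal is correct and follows exactly the route the paper intends: the paper states this result as a (weak, rescaled) form of Brown's collaring theorem \cite{B62} and offers no independent proof, and your argument simply supplies the routine observations that a finite handlebody is a compact metrisable manifold with boundary and that a collar $c:\del W\*[0,1)\to W$ with $c(x,0)=x$ can be reparametrised onto $\del W\*[a,b]$ so that the boundary sits at the level $t=b$. Nothing further is needed.
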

The embedding $e$ is called a \emph{collar} of the boundary $\del W$. Using the notation of Theorem \ref{collared boundary}, we will call the set $e(c)$ a \emph{level} of the collar and the subset $W\sm e(\del W\*(c,b])$ will be said to be \emph{inside the level $e(c)$}. A set inside a level of a handlebody is a compact subset of $W$; moreover the boundary of this set is a manifold of one lower dimension and, when $c>a$, the set inside the level $e(c)$ is homeomorphic to $W$ so is itself a finite handlebody.
 
 \section{Handlebody Neighbourhoods and Adjamagbo's Conjecture}
 %section{Handlebody Neighbourhoods and Adjamagbo's Conjecture}
 %section{Handlebody Neighbourhoods and Adjamagbo's Conjecture}
 %section{Handlebody Neighbourhoods and Adjamagbo's Conjecture}
 
In this section we construct a neighbourhood basis of a compactum in a manifold where the neighbourhoods making up the basis are all handlebodies. We then use this construction to prove Adjamagbo's conjecture.

\begin{thm}\label{Handlebody neighbourhoods on manifolds}
Let $M^m$ be a connected manifold and $V_0\subset M$ a non-empty, relatively compact, open subset of $M$ such that the frontier of $V_0$ in $M$ is an $(m-1)$-manifold. Suppose further in the case where $M$ is closed that $\ol{V_0}\not= M$. Then for each real number $r\in(0,1)$ there is a finite handlebody $W_r$ such that for each $r\in[0,1)$, the collection $\{{\rm Int}(W_{r'})\ /\ r'>r\}$ is a neighbourhood basis of $W_r$, where $W_0$ is the closure of $V_0$.
\end{thm}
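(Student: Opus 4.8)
The plan is to reduce to the case that $M$ is itself a handlebody, and then, using collars, to string together over the parameter interval the subsets inside the levels of collars of a shrinking sequence of finite handlebody neighbourhoods of $\ol{V_0}$. To arrange a convenient ambient: if $M$ is closed, pick $p\in M\sm\ol{V_0}$ (which exists by hypothesis) and replace $M$ by $M\sm\{p\}$; this is again a connected metrisable $m$-manifold, now non-compact, in which $V_0$ has the same closure and the same frontier, so all the hypotheses persist. Thus we may assume $M$ connected and non-compact. By the two Freedman--Quinn theorems quoted above, $M$ is then a handlebody (when $m=4$, smoothability supplies the handlebody decomposition; when $m\ne4$ the characterisation applies at once); and since every component of a non-empty open subset of a connected non-compact manifold is non-compact, the same reasoning shows that every non-empty open subset of $M$ is a handlebody. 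Hence Proposition~\ref{good handlebody} may be applied inside any open subset of $M$.

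Next I would assemble the building blocks. Writing $N(\e)=\{x\in M: d(x,\ol{V_0})<\e\}$, repeated application of Proposition~\ref{good handlebody} (inside the open handlebodies $N(1)$ and then ${\rm Int}(U_n)\cap N(1/(n+1))$) yields finite handlebodies $U_1\supseteq U_2\supseteq\cdots$, each a neighbourhood of $\ol{V_0}$, with $U_n\subseteq N(1/n)$ and $U_{n+1}\subseteq{\rm Int}(U_n)$; then $\bigcap_nU_n=\ol{V_0}$ by metrisability. For each $n$, Theorem~\ref{collared boundary} gives a collar of $U_n$, and restricting it to a thin enough sub-collar (using compactness of $\del U_n$ together with $U_{n+1}\subseteq{\rm Int}(U_n)$) I obtain a collar $e_n:\del U_n\*[0,1]\to U_n$ with $e_n(x,1)=x$ whose image is disjoint from $U_{n+1}$. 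For $c\in[0,1]$ let $U_n^c=U_n\sm e_n(\del U_n\*(c,1])$ be the set inside the level $e_n(\del U_n\*\{c\})$. Then, for $c\in(0,1]$, the set $U_n^c$ is a finite handlebody; the assignment $c\mapsto U_n^c$ is increasing with $U_n^c\subseteq{\rm Int}(U_n^{c'})$ whenever $c<c'$; one has $\bigcap_{c'>c}U_n^{c'}=U_n^c$ for all $c\in[0,1)$; and $U_{n+1}\subseteq{\rm Int}(U_n^c)$ for every $c$, because the image of $e_n$ avoids $U_{n+1}$.

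Now I would build the family. Partition $(0,1)$ into the consecutive half-open intervals $I_n=[\tfrac1{n+1},\tfrac1n)$, $n\ge1$; fix $c^*\in(0,1)$ and, for each $n$, an increasing homeomorphism $c_n:I_n\to[c^*,1)$. Set $W_0=\ol{V_0}$ and $W_r=U_n^{c_n(r)}$ for $r\in I_n$; each $W_r$ with $r\in(0,1)$ is a finite handlebody. The two facts that remain are: (a) $W_r\subseteq{\rm Int}(W_{r'})$ whenever $r<r'$; and (b) $\bigcap_{r'>r}W_{r'}=W_r$ for every $r\in[0,1)$. Both follow directly from the properties of the $U_n^c$ listed above: for (a), the case $r,r'\in I_n$ is the nesting of the $U_n^c$; when $r\in I_n$, $r'\in I_k$ with $k<n$ one has $W_r\subseteq U_n\subseteq U_{k+1}\subseteq{\rm Int}(U_k^{c_k(r')})={\rm Int}(W_{r'})$; and when $r=0$, $\ol{V_0}\subseteq{\rm Int}(U_{n+1})\subseteq{\rm Int}(W_{r'})$ for $r'\in I_n$. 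For (b), the contribution to $\bigcap_{r'>r}W_{r'}$ from $r'\in I_n$ (with $r\in I_n$) equals $W_r$ by right-continuity of $c\mapsto U_n^c$, while the terms with $r'\in I_k$, $k<n$, all contain $U_n\supseteq W_r$ and so do not shrink the intersection; and at $r=0$ one has $\ol{V_0}\subseteq\bigcap_{r'>0}W_{r'}\subseteq\bigcap_nU_n=\ol{V_0}$. Finally, since every $W_r$ is compact, a routine finite-intersection argument upgrades (a) and (b) to the statement that $\{{\rm Int}(W_{r'})\ /\ r'>r\}$ is a neighbourhood basis of $W_r$, which is the assertion of the theorem.

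The step I expect to need genuine care is the assembly just described. The point is that $\ol{V_0}$ itself need not be a handlebody — with a wild frontier it need not even be a manifold with boundary, as happens for the bad complementary domain of an Alexander horned sphere — so it cannot be reached in finitely many collarings; this is why the parameter intervals are forced to accumulate at $0$, and why at each junction the value of $W$ must be taken to be a level of the \emph{outer} of the two handlebodies involved, chosen large enough to swallow everything already built on the inner side, which is precisely what keeps (b) valid across junctions. The remaining ingredients — the reduction, the construction of the nested neighbourhoods and thin collars, and the concluding compactness argument — are routine once Proposition~\ref{good handlebody}, Theorem~\ref{collared boundary} and the Freedman--Quinn theorems are in hand.
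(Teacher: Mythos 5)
Your proposal is correct and follows essentially the same route as the paper's own proof: puncture $M$ if it is closed, take a shrinking sequence of finite handlebody neighbourhoods of $\ol{V_0}$ via Proposition~\ref{good handlebody}, collar their boundaries away from the next handlebody via Theorem~\ref{collared boundary}, and let $W_r$ be the set inside an appropriate collar level. The only differences are that you reparametrise the collar levels by homeomorphisms $c_n$ where the paper indexes the collars directly by $\left[\frac1{n+2},\frac1n\right]$, and that you spell out details the paper leaves implicit (why the open sets are handlebodies, and the compactness argument giving the neighbourhood-basis property).
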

\begin{proof}
Suppose given $M^m$ and $V_0\subset M$ as in Theorem \ref{Handlebody neighbourhoods on manifolds}. In the case where $M$ is closed pick a point $p\in M\sm\ol{V_0}$. It follows that $M$ (in the case where $M$ is open) or $M\sm\{p\}$ (in the case where $M$ is closed) may be embedded properly in some euclidean space, see \cite[Theorem 2.1(1$\Leftrightarrow$36)]{G14} for example. Let $d$ be the metric on $M$ or $M\sm\{p\}$ as appropriate inherited from the euclidean metric under some fixed proper embedding.

For each natural number $n$ let 
$$U_n=\left\{x\in M\ /\ d(x,\ol{V_0})<\frac1n\right\}.$$
Then $\ol{U_{n+1}}\subset U_n$ for each $n$ and the collection $\{U_n\ /\ n=1,2,\dots\}$ is a neighbourhood basis for $\ol{V_0}$.

For each $n$ use Proposition \ref{good handlebody} to find a finite handlebody $X_n\subset U_n$ containing $\ol{U_{n+1}}$ in its interior. By Theorem \ref{collared boundary} we may find a collar $e_n:\del X_n\*\left[\frac1{n+2},\frac1n\right]\to X_n$ such that $e_n\left(x,\frac1n\right)=x$ for each $x\in\del X_n$. Further, compactness of the disjoint sets $\ol{U_{n+1}}$ and $\del X_n$ allows us to assume that the image of $e_n$ is disjoint from $\ol{U_{n+1}}$.

For each $r\in(0,1)$ choose $W_r$ as follows. There is a unique natural number $n$ such that $\frac1{n+1}\le r<\frac1n$. Let $W_r$ be the set inside the level $e_n(r)$ of the handlebody $X_n$. As noted above, $W_r$ is a handlebody. Moreover, for each $r\in[0,1)$, the collection ${\rm Int}(W_{r'})\ /\ r'>r\}$ is a neighbourhood basis of $W_r$.
\end{proof}

We are now ready to prove Adjamagbo's conjecture.

\begin{cor}\label{Adjamagbo's conjecture}
Let $M^m$ be a connected topological manifold and $V_0\subset M$ be a relatively compact non-empty open subset $V_0$ of $M$ such that the boundary of $V_0$ is a manifold. Then there exists a family $\{V_r\ /\ r\in[0,\infty)\}$ of relatively compact open subsets of $M$ the boundaries of which are topological manifolds such that $M$ is the union of the elements of the family, and that for any $r \in[0,\infty)$ the family $\{V_{r'}\ /\ r'>r\}$ is a neighbourhood basis of the closure of $V_r$.
\end{cor}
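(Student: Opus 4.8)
The plan is to assemble $\{V_r\}$ from countably many blocks, the block indexed by $[n,n+1)$ consisting of a single application of a mild strengthening of Theorem~\ref{Handlebody neighbourhoods on manifolds}. First dispose of the degenerate case $\ol{V_0}=M$: then $M$ is compact and putting $V_r:=M$ for all $r>0$ already works, since the frontier of $M$ in $M$ is the empty manifold and $\{M\}$ is a neighbourhood basis of $M$. So assume $\ol{V_0}\neq M$; as in the proof of Theorem~\ref{Handlebody neighbourhoods on manifolds}, let $N:=M$ if $M$ is open and $N:=M\sm\{p\}$ for a fixed $p\in M\sm\ol{V_0}$ if $M$ is closed. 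Then $N$ is a connected, non-compact, metrisable manifold; as in that proof, $N$ and its relevant open subsets are handlebodies, so Proposition~\ref{good handlebody} may be used inside them. Fix a proper metric $d$ on $N$ and a compact exhaustion $L_1\subset L_2\subset\cdots$ of $N$. (The hypothesis that the boundary of $V_0$ be a manifold is needed only to make $V_0$ itself a legitimate member of the family; it is not used in the proof of Theorem~\ref{Handlebody neighbourhoods on manifolds}.)

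The recurring step is the following variant of Theorem~\ref{Handlebody neighbourhoods on manifolds}. Let $V\subset N$ be non-empty, relatively compact and open, and let $C\subset N$ be compact. Running the proof of that theorem verbatim, but (i) taking its neighbourhoods to be $U_n=\{x\in N\ /\ d(x,\ol V)<\rho/n\}$ with $\rho$ chosen so large that $C\subset U_1$, and (ii) choosing its first handlebody --- call it $X(V,C)$ --- so that $C\cup\ol{U_2}\subset{\rm Int}(X(V,C))$ (possible by Proposition~\ref{good handlebody}, since $C\cup\ol{U_2}$ is a compact subset of the handlebody $U_1$), one obtains finite handlebodies $W_s$ $(s\in(0,1))$, nested in the sense that $W_s\subset{\rm Int}(W_{s'})$ for $s<s'$, with $\ol V\cup C\subset{\rm Int}(X(V,C))$ and $\bigcup_{s\in(0,1)}{\rm Int}(W_s)={\rm Int}(X(V,C))$, such that for each $s\in[0,1)$ the collection $\{{\rm Int}(W_{s'})\ /\ s'>s\}$ is a neighbourhood basis of $W_s$, where $W_0:=\ol V$. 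Here the neighbourhood-basis clause (and the nesting) is literally the conclusion of Theorem~\ref{Handlebody neighbourhoods on manifolds} and is unaffected by enlarging the first handlebody, while the equality $\bigcup_s{\rm Int}(W_s)={\rm Int}(X(V,C))$ comes from the same computation as in the original proof, the collar levels of $\del X(V,C)$ sweeping out its interior.

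Now build finite handlebodies $Z_0\subset Z_1\subset\cdots$ together with the family. At stage $0$, apply the variant to $V=V_0$ with $C=L_1$; write $Z_0=X(V_0,L_1)$ and $W^0_s$ for the handlebodies obtained, and put $V_r:={\rm Int}(W^0_r)$ for $r\in(0,1)$, keeping $V_0$ equal to the given set. At stage $n\geq1$, given $Z_{n-1}$, apply the variant to $V={\rm Int}(Z_{n-1})$ (so $\ol V=Z_{n-1}$) with $C=L_{n+1}$; write $Z_n=X({\rm Int}(Z_{n-1}),L_{n+1})$, whence $Z_{n-1}\cup L_{n+1}\subset{\rm Int}(Z_n)$, and $W^n_s$ for the handlebodies, and put $V_r:={\rm Int}(W^n_{r-n})$ for $r\in[n,n+1)$. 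Then each $V_r$ is open and relatively compact with manifold frontier (the given one when $r=0$, otherwise some $\del W^n_{r-n}$). The family is increasing: within a stage the $W^n_s$ are nested; at the very start $V_0\subset\ol{V_0}=W^0_0\subset{\rm Int}(W^0_r)$ for $r\in(0,1)$; and consecutive stages abut because $\bigcup_s{\rm Int}(W^n_s)={\rm Int}(Z_n)=V_{n+1}$. Its union is $\bigcup_{n\geq0}{\rm Int}(Z_n)=N$, since $L_{n+1}\subset{\rm Int}(Z_n)$. Finally, for $r\in[n,n+1)$ the closure of $V_r$ is $F:=W^n_{r-n}$ (equal to $\ol{V_0}$ when $r=0$), and $\{V_{r'}\ /\ r'>r\}$ is a neighbourhood basis of $F$: the members with $r'\in(r,n+1)$ are neighbourhoods of $F$ shrinking down to it, by the variant's neighbourhood-basis clause for the $n$th stage, while each member with $r'\geq n+1$ contains $V_{n+1}={\rm Int}(Z_n)\supseteq F$ and is therefore a neighbourhood of $F$ too.

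If $M$ is open, $\{V_r\ /\ r\in[0,\infty)\}$ is the required family. If $M$ is closed, then $\bigcup_rV_r=N=M\sm\{p\}\subsetneq M$, so precompose the index with an increasing homeomorphism $[0,\infty)\to[0,1)$ and then set $V_r:=M$ for all $r\geq1$; since every original member has closure contained in $M\sm\{p\}\subsetneq M$, this is still an increasing family of relatively compact open sets with manifold frontiers, now with union $M$, and the neighbourhood-basis clause persists --- for $r<1$ one has merely adjoined the single further neighbourhood $M$ of the relevant closure, and for $r\geq1$ the singleton $\{M\}$ is a neighbourhood basis of $\ol{V_r}=M$. The one point requiring real care is the variant above: Theorem~\ref{Handlebody neighbourhoods on manifolds} supplies arbitrarily small handlebody neighbourhoods of a set but exerts no control over how large its family ultimately becomes, whereas here each stage must simultaneously keep providing a shrinking neighbourhood basis of the handlebody reached at the previous stage \emph{and} grow enough to engulf $L_{n+1}$. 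Both are arranged by reopening the proof of Theorem~\ref{Handlebody neighbourhoods on manifolds} to prescribe that the first handlebody it constructs contain $L_{n+1}$ (the proof builds that handlebody via Proposition~\ref{good handlebody} and is free to take it larger), and by recording the identity $\bigcup_s{\rm Int}(W_s)={\rm Int}(X(V,C))$ so that successive stages fit together without a gap.
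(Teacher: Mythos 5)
Your proof is correct and follows essentially the same route as the paper: exhaust the non-compact manifold ($M$ or $M\sm\{p\}$) by an increasing sequence of finite handlebodies obtained from Proposition \ref{good handlebody}, and use Brown collar levels to interpolate a continuum of indices between consecutive ones, capping with $M$ in the closed case. Your ``variant'' with the prescribed compact set $C$ forced into the first handlebody is exactly the mechanism the paper realises via the expanding sets $U_n'=\{x\ /\ d(x,\ol{V_0})<n\}$ and the handlebodies $X_n'$ containing them; your write-up merely makes explicit the gluing of consecutive stages that the paper leaves to the reader.
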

\begin{proof}
The case where $\ol{V_0}=M$ is trivial so we assume that $\ol{V_0}\not= M$.

Applying Theorem \ref{Handlebody neighbourhoods on manifolds} to the case $\ol{V_0}\not= M$, for each $r\in(0,1)$ set $V_r={\rm Int}(W_r)$. Then each $V_r$ is open and relatively compact with boundary a topological manifold, and for each $r<1$ the family $\{V_{r'}\ /\ r'>r\}$ is a neighbourhood basis of the closure of $V_r$. The proof is complete in the case where $M$ is closed if we set $V_r=M$ for each $r\ge1$.

Suppose $M$ is open. In this case follow the procedure in the proof of Theorem \ref{Handlebody neighbourhoods on manifolds} but now replace the sets $U_n$ by sets $U_n'=\left\{x\in M\ /\ d(x,\ol{V_0})<n\right\}$ and the handlebodies $X_n$ by finite handlebodies $X_n'$ whose boundaries lie in $U_{n+1}'\sm\ol{U_n'}$. For each natural number $n$ and each $r\in[n,n+1)$ we then construct the open sets $V_r$ to be the interiors of sets inside appropriate levels of the handlebody $X_n'$.
\end{proof}

\begin{rem}
If $V_0$ is connected in Corollary \ref{Adjamagbo's conjecture} then we may also assume that each of the neighbourhoods $V_r$ is also connected. To ensure this, when we construct the handlebodies $X_n$, we discard any supernumerary components of the handlebodies.
\end{rem}

\begin{rem}
The assumption in Corollary \ref{Adjamagbo's conjecture} that $M$ is connected can be weakened to the assumption that $M$ has countably many components. We may construct sets such as the sets $V_r$ in the proof of Corollary \ref{Adjamagbo's conjecture} within each component of $M$, beginning with an arbitrary point of the component in place of $\ol V_0$ when a component contains no point of $V_0$, and then taking $V_r$ to be the union of the resulting sets.
\end{rem}

\section{Questions}
%\section{Questions}
%\section{Questions}
%\section{Questions}

In our construction of the neighbourhoods $V_r$ in the proof of Corollary \ref{Adjamagbo's conjecture} the neighbourhoods grow gradually for a while then jump: for example $V_1\sm\cup_{r<1}V_r$ is non-empty. So the following question might be addressed.

\begin{ques}
Given a relatively compact non-empty open subset $V_0$ of a topological manifold $M$ such that $\del V_0$ is a manifold, is it possible to find a family $\{V_r\ /\ r\in[0,\infty)\}$ of relatively compact open subsets of $M$ the boundaries of which are topological manifolds such that $M=V_0\cup(\cup_{r\in[0,\infty)}\del V_r)$, and that for any $r \in[0,\infty)$ the family $\{V_{r'}\ /\ r'>r\}$ is a fundamental system of neighbourhoods of the closure of $V_r$? (Here $\del V_0$ denotes the boundary of $V_0$.)
\end{ques}

We might also explore Adjamagbo's conjecture in other categories. As an example:

\begin{ques}
Given a relatively compact non-empty open subset $V_0$ of a connected smooth manifold $M^m$ such that the boundary of $V_0$ is a smooth submanifold, there exists an increasing family $\la V_r\ra_{r\in[0,\infty)}$ of relatively compact open subsets of $M$ the boundaries of which are smooth submanifolds such that $M$ is the union of the elements of the family, and that for any $r \in[0,\infty)$, the family $\la V_{r'}\ra_{r'>r}$ is a fundamental system of neighbourhoods of the closure of $V_r$.
\end{ques}

Morse functions may well play a role in answering the latter question.

\end{document}